\theoremstyle{definition}
\newtheorem{exmp}{Example}[section]
\newcommand{\Z}{\mathbb{Z}}
\newcommand{\D}{\mathbb{D}}
\newcommand{\s}{\mathbb{S}}
\newcommand{\id}{\textrm{id}}
\newcommand{\Diff}{\operatorname{Diff}}
\newcommand{\ob}{\textrm{OB}}
\newtheorem{proposition}{Proposition}[section]
\newtheorem{theorem}[proposition]{Theorem}
\newtheorem{definition}[proposition]{Definition}
\newtheorem{lemma}[proposition]{Lemma}
\newtheorem{corollary}[proposition]{Corollary}
\newtheorem{remark}[proposition]{Remark}
\begin{document}
	
	\title{A note on codimension $2$ spun embedding}
	
	\subjclass{}
	
	\keywords{embedding, open book, 3-manifold}

	\author{Sneha Banerjee}
	\address{IAI TCG CREST Kolkata, and Academy of Scientific Research and Innovation, Gazhiabad}
	\email{sneha.banerjee.121@tcgcrest.org}
	
	\author{Shital Lawande}
	\address{IAI TCG CREST Kolkata, and Ramakrishna Mission Vivekananda Education and Research Institute, Belur Math}
	\email{shital.lawande@tcgcrest.org}
	
	\author{Subhadeep Rana}
	\address{IAI TCG CREST Kolkata, and Academy of Scientific Research and Innovation, Gazhiabad}
	\email{subhadeep.rana.125@tcgcrest.org}
	
	\author{Kuldeep Saha}
	\address{IAI TCG CREST Kolkata, and Academy of Scientific Research and Innovation, Gazhiabad}
	\email{kuldeep.saha@gmail.com, kuldeep.saha@tcgcrest.org}

	\begin{abstract}
		
		We prove that if a closed manifold $B$ is a connected component of the binding of an open book decomposition of a manifold $M$, then every open book decomposition of $B$ spun embeds in $M$. As an application, we prove that every open book decomposition of a simply connected spin $5$-manifold spun embeds in $\s^7$ and every $3$-dimensional open book spun embeds in $\s^5$. We also define a notion of spun embedding for Morse open books.
		
	\end{abstract}
	
	\maketitle

	\section{Introduction}	
	
	An open book decomposition of a manifold $M^m$ is a pair $(V^{m-1},h)$, such that $M^m$ is diffeomorphic to the quotient space $\mathcal{MT}(V^{m-1}, h) \cup_{id} \partial V^{m-1} \times D^2$. Here, $V^{m-1}$ is called the \emph{page} and $\partial V$ is called the \emph{binding} of the open book. The map $h$, called \emph{monodromy}, is a diffeomorphism of $V^{m-1}$ that restricts to identity near the boundary $\partial V$, and $\mathcal{MT}(V^{m-1}, h)$ denotes the mapping torus of $h$. We denote the total space of such an open book by $\textrm{OB}(V,h)$. 
	
	\noindent An embedding $f$ of $\textrm{OB}(V_1,h_1)$ in $\textrm{OB}(V_2,h_2)$ is called a \emph{spun embedding} (or an \emph{open book embedding}), if $f$ takes the page $V_1$ to the page $V_2$ and $f$ is compatible with the monodromies : $h_2 \circ f = f \circ h_1$, up to isotopy. We say $M^n$ spun embeds in $N^{n+k}$ if there exists an open book decomposition of $M$ that spun embeds in an open book decomposition of $N$. In recent times, the study of codimension $2$ \emph{spun embedding} (or \emph{open book embedding}) has attracted much attention (eg. \cite{EF}, \cite{EL}, \cite{pps}, \cite{GPS}, \cite{Saha} ,\cite{ls1}). The notion of spun embedding of an open book decomposition is strictly stronger than ordinary smooth embedding. Obstructions to spun embedding an open book in a given open book has been discussed in \cite{ls1}. Etnyre and Lekili \cite{EL} gave the first constructions of codimension $2$ spun embeddings of $3$-manifolds by embedding pages as fibers in a Lefschetz fibration. Later, an alternate construction of spun embedding was given by Pancholi, Pandit and the fourth author \cite{pps}, based on the observation that there is a proper embedding of a annulus $\mathcal{A}$ in $\D^4$ such that, the Dehn twist along the core of $\mathcal{A}$ is induced by an isotopy of $\D^4$, relative to the boundary. This construction, called the \emph{annulus trick}, was later adapted to the setting of non-orientable $3$-manifolds by Ghanawat, Pandit and Selvakumar \cite{GPS}. In this note, we observe a generalization of this construction and give some applications.

	\begin{theorem} \label{thm0}
		
		Let $M$ be a closed orientable $n$-manifold with an open book decomposition $\ob(P^{n-1},\rho)$. Let $B^{n-2}$ be a connceted component of the binding $\partial P$ and let $\ob(V_B,\phi_B)$ be an open book decomposition of $B$. Then, $\ob(V_B, \phi_B^k)$ spun embeds in $\ob(P^{n-1},\rho)$ for all $k \in \Z$.
		
	\end{theorem}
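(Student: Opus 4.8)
The plan is to build the spun embedding explicitly inside a tubular neighborhood of the binding component $B$, using two structural facts. First, because $B$ is a component of the binding of $\ob(P,\rho)$, it has a product neighborhood $B\x\D^2 \subset M$, and on it the monodromy $\rho$ is the identity, since $\rho$ is required to be the identity near $\bd P$. Writing $(r,\theta)$ for polar coordinates on $\D^2$, the page $P_\theta$ of $M$ meets this neighborhood in the collar $B\x\{re^{i\theta}: 0\le r\le 1\}$. Thus a copy of the page $V_B$ embedded into $B$ and pushed radially off the binding lands inside a single page of $M$, and this is the room I will exploit.

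Next I would invoke the open book $\ob(V_B,\phi_B)$ of $B$ itself. Let $p\colon B\setminus\bd V_B \to S^1$ be its fibration, with pages $V_B^{\varphi}=\overline{p^{-1}(\varphi)}$ and identifications $\iota_\varphi\colon V_B \xrightarrow{\sim} V_B^{\varphi}$ satisfying $\iota_{2\pi}=\iota_0\circ\phi_B$. The heart of the construction is a spinning map that couples the angular coordinate $\theta$ of $M$'s open book to that of $B$'s open book at the ratio $k:1$. On the mapping torus part of $\ob(V_B,\phi_B^k)$ I would set $[v,\psi]\mapsto\bigl(\iota_{k\psi}(v),\,r(v)\,e^{i\psi}\bigr)\in B\x\D^2$, where $r(v)\in(0,1]$ is a fixed radial profile. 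Since $\theta=\psi$ is injective on $[0,2\pi)$ the images for distinct $\psi$ are disjoint, and at $\psi=2\pi$ the relation $\iota_{2k\pi}=\iota_0\circ\phi_B^k$ is exactly the mapping-torus gluing $(v,2\pi)\sim(\phi_B^k(v),0)$; hence the map is well defined and sends the page at angle $\psi$ into $P_\psi$. The monodromy compatibility then comes for free: as $\rho$ is the identity near $B$ we have $\rho\circ f=f$ on the page, while the sweep $t\mapsto\iota_{kt}$, $t\in[0,2\pi]$, is an isotopy from $f=\iota_0$ to $\iota_0\circ\phi_B^k=f\circ\phi_B^k$. The cases $k=0$ (a product embedding $V_B\x S^1\inj B\x\D^2$) and $k<0$ (sweeping in the opposite sense) are handled identically, so every $k\in\Z$ is covered.

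The main obstacle will be promoting this page-level recipe to a smooth embedding of the whole total space, i.e.\ extending it across the binding $\bd V_B$ of $\ob(V_B,\phi_B^k)$ and checking smoothness where the pages are assembled. Near $\bd V_B$ all the pages $V_B^{k\psi}$ limit onto the binding of $B$'s open book, which carries its own product neighborhood $\bd V_B\x\D^2_B\subset B$; this must be glued compatibly with the radial $\D^2$-direction of $M$. Since $\bd V_B$ has a rank-$4$ normal neighborhood $\D^2_B\x\D^2_M$ in $M$ while it is only codimension $2$ in $\ob(V_B,\phi_B^k)$, there is ample room, but the binding neighborhood must be threaded through so that it closes up smoothly with the spinning radial profile $r(v)$ and matches $M$'s open book across $\theta=0$. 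This is precisely the generalization of the annulus trick of \cite{pps}: realizing the monodromy $\phi_B^k$ as the restriction of an ambient isotopy of $B\x\D^2$ that is the identity on the outer boundary. Once this local model is arranged so pages and bindings agree on overlaps, the assembled map is the desired spun embedding $\ob(V_B,\phi_B^k)\inj\ob(P,\rho)$, and the page-preserving property established above yields the isotopy condition in the definition.
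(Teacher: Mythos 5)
Your core idea is the same as the paper's: push $V_B$ into the collar $B\times[0,1]$ of $B$ inside a single page of $\ob(P,\rho)$, use the fact that $\rho$ is the identity there, and realize $\phi_B^k$ by sweeping through the pages of $B$'s own open book (the flow of $\ob(V_B,\phi_B)$). That sweep $t\mapsto\iota_{kt}$ is exactly the paper's ``page trick'' applied to a Seifert type embedding of $V_B$ in the collar, so the skeleton of your argument matches the intended proof.

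There are, however, two concrete problems in the execution, both concentrated at $\partial V_B$, and you flag them as ``the main obstacle'' without closing them. First, with a radial profile $r(v)\in(0,1]$ your map does not send $\partial V_B$ into $\partial P$ (the binding of $M$ sits at $r=0$), so what you have written is not a proper embedding of pairs $(V_B,\partial V_B)\to(P,\partial P)$ and hence does not yet meet the definition of a spun embedding; moreover the isotopy witnessing monodromy compatibility must be relative to the boundary, which again forces $\partial V_B$ to sit at $r=0$ where the open book flow of $B$ is stationary. Second, if you insist on the globally coupled formula $[v,\psi]\mapsto(\iota_{k\psi}(v),r(v)e^{i\psi})$ all the way down to the binding, the local model near a point of $\partial V_B$ in coordinates $\partial V_B\times\D^2_B\times\D^2_M$ is the cone on a $(k,1)$-curve, $\{(se^{ik\psi},r(s)e^{i\psi})\}$, which is not manifestly smooth at $s=0$ for $|k|\ge 2$; this is a genuine issue, not just bookkeeping. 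The clean repair is to let $r$ vanish exactly on $\partial V_B$ (a Seifert type embedding, with $\partial V_B$ fixed pointwise in $B\subset\partial P$) and to damp the spinning near the binding so that the isotopy is rel boundary: then the monodromy compatibility $f\circ\phi_B^k\simeq f=\rho\circ f$ holds at the level of the page, and the extension to the total spaces is the canonical product map $(b,(r,\theta))\mapsto(f(b),(r,\theta))$ near the binding, which involves no spinning and hence no smoothness question. With that adjustment your argument becomes the paper's proof.
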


	\noindent Here, $\phi_B^k$ denotes the $k$-times composition of $\phi_B$. An immediate corollary of Theorem \ref{thm0} is the following.

	\begin{corollary} \label{cor1}
		Every open book decomposition of $\s^{n-2}$ spun embeds in $\s^n = \ob(\D^{n-1}, \id)$ for $n \geq 4$.
	\end{corollary}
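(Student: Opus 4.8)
The plan is to read the statement as a direct specialization of Theorem \ref{thm0}, taking the ambient manifold to be $\s^n$ with its trivial open book. The first step is to record the identification $\s^n \iso \ob(\D^{n-1}, \id)$, which is already asserted in the statement. Writing the $(n+1)$-disk as a product $\D^{n+1} = \D^{n-1} \times \D^2$, its boundary splits as
\[
\s^n = \bd(\D^{n-1} \times \D^2) = \big(\bd\D^{n-1} \times \D^2\big) \cup \big(\D^{n-1} \times \s^1\big),
\]
and the second piece is the mapping torus $\mathcal{MT}(\D^{n-1}, \id) = \D^{n-1} \times \s^1$. This is precisely the total space $\mathcal{MT}(\D^{n-1}, \id) \cup_{\id} \bd\D^{n-1} \times \D^2$ of the open book with page $\D^{n-1}$ and trivial monodromy, so $\ob(\D^{n-1}, \id) \iso \s^n$ with binding $\bd\D^{n-1} = \s^{n-2}$.

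Next I would check that $\s^{n-2}$ fits the hypothesis of Theorem \ref{thm0}. For $n \geq 4$ (indeed for $n \geq 3$) the sphere $\s^{n-2}$ is connected, hence it is its own unique connected component of the binding of $\ob(\D^{n-1}, \id)$. Thus, setting $M = \s^n$, $P^{n-1} = \D^{n-1}$, $\rho = \id$, and $B^{n-2} = \s^{n-2}$, Theorem \ref{thm0} applies: for any open book decomposition $\ob(V_B, \phi_B)$ of $\s^{n-2}$, it produces a spun embedding of $\ob(V_B, \phi_B^k)$ into $\ob(\D^{n-1}, \id) = \s^n$ for every $k \in \Z$. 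Specializing to $k = 1$ yields a spun embedding of $\ob(V_B, \phi_B)$ itself into $\s^n$, which is exactly the desired conclusion.

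Since the whole argument is a single application of Theorem \ref{thm0}, I do not expect any genuine obstacle. The only points needing care are the identification $\ob(\D^{n-1}, \id) \iso \s^n$ and the computation of its binding as $\s^{n-2}$, both of which are routine, together with the elementary remark that $\s^{n-2}$ is connected for $n \geq 4$ so that it qualifies as a connected component of the binding. This is why the statement is legitimately \emph{immediate} once Theorem \ref{thm0} is in hand.
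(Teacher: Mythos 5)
Your argument is exactly the paper's: the authors present Corollary \ref{cor1} as an immediate consequence of Theorem \ref{thm0} applied to $\s^n = \ob(\D^{n-1},\id)$ with binding $\s^{n-2}$, and specializing to $k=1$. Your additional verification that $\D^{n-1}\times\s^1 \cup \s^{n-2}\times\D^2 = \partial(\D^{n-1}\times\D^2) = \s^n$ is the standard justification and is correct.
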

	
	\noindent More generally, one can think of any codimension $2$ fibered submanifold of $\s^m$. In particular, every $(2n-1)$-dimensional Brieskorn manifold appears as the binding of an open book decomposition of $\s^{2n+1}$. 
	
	 \noindent Let $Q^5$ be a simply connected spin manifold. Kwon and Van Koert (Corollary $3.9$ in \cite{KoVan}) have shown that every simply connected spin $5$-manifold is a connected sum of Brieskorn manifolds. Thus, $M = B_1 \# B_2 \# \cdots \# B_l$ for some $l \in \Z_{>0}$, where the $B_i$s are $5$-dimensional Brieskorn manifolds. Let $\ob(V_i, \phi_i)$ be an open book decomposition of $\s^7$ with binding $\partial V_i = B_i$. Then, $\ob(V_1 \#_b V_2 \#_b \cdots \#_b V_l, \phi_1 \circ \phi_2 \circ \phi_l) = \ob(V_1,\phi_1) \# \ob(V_2,\phi_2) \# \cdots \# \ob(V_l,\phi_l)$ (see section $2.4$ in \cite{DGK}) is an open book decomposition of $\s^7$ with binding $Q^5$.  Thus, Theorem \ref{thm0} implies the following.
	
	\begin{corollary} \label{cor2}
		
		Let $Q$ be a closed simply connected spin $5$-manifold. Then, every open book decomposition of $Q$ spun embeds in $\s^7$ with a fixed page bounded by $Q$.
		
	\end{corollary}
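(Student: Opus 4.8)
The plan is to realize $Q$ as the entire (connected) binding of a single open book decomposition of $\s^7$ and then to quote Theorem \ref{thm0} directly; most of the construction is already assembled in the paragraph preceding the statement, so the argument is chiefly bookkeeping.

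First, I would invoke the classification of Kwon and Van Koert to write $Q = B_1 \# \cdots \# B_l$ as a connected sum of $5$-dimensional Brieskorn manifolds. Each $B_i$ is the link of an isolated singularity, hence appears as the binding $\partial V_i$ of an open book $\ob(V_i, \phi_i)$ of $\s^7$, with $V_i$ the Milnor fiber and $\phi_i$ the geometric monodromy. Forming the connected sum of these open books as in \cite{DGK}, I obtain $\ob(P, \rho)$ with $P = V_1 \#_b \cdots \#_b V_l$ and $\rho = \phi_1 \circ \cdots \circ \phi_l$; its total space is $\s^7 \# \cdots \# \s^7 \cong \s^7$ and its binding is $\partial V_1 \# \cdots \# \partial V_l = Q$, so in particular $\partial P = Q$.

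Since $Q$ is a connected sum of closed connected manifolds it is connected, hence a single connected component of $\partial P$. Given any open book decomposition $\ob(V_Q, \phi_Q)$ of $Q$, I would then apply Theorem \ref{thm0} with $M = \s^7$, page $P$, monodromy $\rho$, binding component $B = Q$, and $k = 1$, obtaining a spun embedding of $\ob(V_Q, \phi_Q)$ into $\ob(P, \rho) = \s^7$. As the target open book is fixed independently of the chosen open book of $Q$, its page $P$ is the promised fixed page with $\partial P = Q$. Given that Theorem \ref{thm0} does the heavy lifting, the only point demanding genuine care is the identification of the total space of $\ob(P, \rho)$: one must confirm that the boundary connected sum of Milnor fibers with composed monodromy reproduces the standard $\s^7$ rather than a homotopy sphere, which follows from the connected-sum formula for open books together with the fact that connected sum with the standard sphere is the identity.
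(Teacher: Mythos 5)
Your proposal is correct and follows essentially the same route as the paper: realize $Q$ as the binding of a connected-sum open book $\ob(V_1 \#_b \cdots \#_b V_l, \phi_1 \circ \cdots \circ \phi_l)$ of $\s^7$ via the Kwon--Van Koert decomposition into Brieskorn manifolds, then apply Theorem \ref{thm0}. The paper carries out exactly this construction in the paragraph preceding the corollary, so no further comment is needed.
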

	
  \noindent Saeki (Theorem $6.1$ in \cite{Saeki}) has shown that every closed oriented $3$-manifold can be realized as the connected binding of an open book decomposition of $\s^5$. Together with Theorem \ref{thm0} this implies the following.

	\begin{corollary}\label{cor3}
		
		Let $M^3$ be a closed orientable $3$-manifold. Then, every open book decomposition of $M^3$ spun embeds in $\s^5$ with a fixed page bounded by $M$.
		
	\end{corollary}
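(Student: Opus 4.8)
The plan is to combine Saeki's realization theorem with Theorem \ref{thm0} directly, matching the dimensions carefully. First I would invoke Saeki's Theorem $6.1$ to produce, for the given closed orientable $3$-manifold $M^3$, an open book decomposition $\ob(P^4,\rho)$ of $\s^5$ whose binding $\partial P$ is connected and diffeomorphic to $M^3$. Since this binding is connected, $M^3$ is itself the unique connected component of $\partial P$, so it plays the role of $B^{n-2}$ in Theorem \ref{thm0} with $n=5$; the dimension count is consistent, as $n-2=3$ is the dimension of $M$ and $n-1=4$ is the dimension of the page $P$.

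Next, given an arbitrary open book decomposition $\ob(V_B,\phi_B)$ of $M^3$, I would apply Theorem \ref{thm0} with ambient manifold $\s^5$, ambient monodromy $\rho$ coming from the Saeki open book, and binding component $B=M^3$. This yields that $\ob(V_B,\phi_B^k)$ spun embeds in $\ob(P^4,\rho)=\s^5$ for every $k\in\Z$. Specializing to $k=1$ gives that $\ob(V_B,\phi_B)$ itself spun embeds in $\s^5$, and because the ambient open book is the one furnished by Saeki, its page $P^4$ is bounded by $M^3$, which is exactly the ``fixed page bounded by $M$'' asserted in the statement.

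The argument is essentially a specialization of the two cited inputs, so the only substantive points requiring care are bookkeeping rather than any analytic or topological difficulty. One must check that Saeki's notion of a \emph{connected binding} coincides with the hypothesis of Theorem \ref{thm0} that $B$ be a connected component of the binding; this is immediate, since a connected binding is trivially its own single component. The second point is simply to record the choice $k=1$, so that the conclusion concerns the prescribed open book $\ob(V_B,\phi_B)$ and not merely some power of its monodromy. Thus the main (and minor) obstacle is confirming that the hypotheses of Theorem \ref{thm0} are met by Saeki's construction in the correct dimension, after which the conclusion follows at once.
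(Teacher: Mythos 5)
Your proposal is correct and follows exactly the paper's argument: it combines Saeki's Theorem $6.1$ (realizing $M^3$ as the connected binding of an open book of $\s^5$) with Theorem \ref{thm0}, specializing to $k=1$. The dimension bookkeeping and the observation that a connected binding is its own unique component are exactly the (minor) points the paper's one-line derivation implicitly relies on.
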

	
	\noindent We note that an analogue of \ref{cor2} and \ref{cor3} for contact manifolds can not hold. It is known by the work of Thurston--Winkelenkemper and Giroux that every contact structure on a $3$-manifold corresponds to an open book decomposition. By Kasuya \cite{Kasuya}, a contact manifold $(M,\zeta)$ admits a contact embedding in a contact $\s^5$ only if its first Chern class $c_1(\zeta)$ vanishes. However, in general, it is not true that every spun embedding of a $3$-manifold in $\s^5$ corresponds to a contact embedding of the corresponding contact $3$-manifold. In particular, many of the spun embeddings given by Corollary \ref{cor2} and Corollary \ref{cor3} are examples of codimension $2$ proper embeddings (embedding between pages of open books) which are not isotopic to a symplectic embedding.
	
	\vspace{0.25cm}

	\noindent It is not known yet if there exists a fixed open book decomposition of $\s^5$ where all $3$-dimensional open books admit spun embedding. We call such an open book decomposition of $\s^5$ \emph{universal}. A recent work (Theorem $1.4$ in \cite{ls1}) by the second and the fourth author shows that none of Saeki's open books (given by Theorem $6.1$ in \cite{Saeki}) can be universal.

	\vspace{0.25cm}

	Theorem \ref{thm0} is proved using a simple generalization of the \emph{Hopf annulus trick} (see section \ref{genhopftrick}), which we call the \emph{page trick}. In section \ref{pagetrickap}, we discuss some more applications of the page trick to construct spun embeddings. In particular, the recent finding of various new open book decompositions by Hsueh \cite{hsueh2}, and a generalization of the notion of plumbing by Ozbagci and Popescu-Pampu \cite{OP} give us a large class of novel examples where the page trick can be applied.  
	
	Finally, in section \ref{morseob}, we discuss a notion of spun embedding for \emph{Morse open books} and describe a version of the page trick in this context (see Proposition \ref{morsepagetrick} and Example \ref{brancexmp}).

	Unless stated otherwise, we always work in the category of smooth manifolds.

	\section{preliminaries} \label{prelim}
	\subsection{Open books and embeddings} An open book decomposition of a closed $(2n+1)$-manifold $M$ consists of a codimension $2$ closed submanifold $B$ and a fibration map $\pi : M \setminus B \rightarrow \s^1$, such that in a tubular neighborhood of $B \subset M$, the restriction map $\pi : B \times (\D^2 \setminus \{0\}) \rightarrow \s^1$ is given by $(b,r,\theta) \mapsto \theta$. The fibration $\pi$ determines a unique fiber manifold $N^{2n}$ whose boundary is $B$. The closure $\bar{N}$ is called the \emph{page} and $B$ is called the \emph{binding}. The monodromy of the fibration map $\pi$ determines a diffeomorphism $\phi$ of $\bar{N}$ such that $\phi$ is identity near the boundary $\partial \bar{N}$. In particular, $M = \mathcal{MT}(\bar{N}, \phi) \cup _{id, \partial} \partial \bar{N} \times \D^2$. We denote such an open book decomposition of $M$ by $\textrm{OB}(\bar{N},\phi)$. The map $\phi$ is called the \emph{monodromy} of the open book. This description of an open book decomposition in terms of its page and monodromy is known as an \emph{abstract open book}. However, these two notions are equivalent and we will always describe an open book decomposition of a manifold as an abstarct open book. Two open book decompositions with the same page are \emph{equivalent} if their monodromies are isotopic, relative to a collar neighborhood of the binding in a page.

	\begin{definition}[Spun embedding]
		
		We say, $\textrm{OB}(\Sigma_1,\phi_1)$ \emph{spun embeds} in $\textrm{OB}(\Sigma_2,\phi_2)$, if there exists a proper embedding $g : (\Sigma_1,\partial \Sigma_1) \rightarrow (\Sigma_2,\partial \Sigma_2)$ such that $g \circ \phi_1$ is isotopic to $\phi_2 \circ g$, relative to the boundary. A spun embedding is also known as an \emph{open book embedding} in the literature.
		
	\end{definition}
	
	\noindent By embedding $\partial \Sigma_1 \times \D^2$ in  $\partial \Sigma_2 \times \D^2$ via the map $(b,(r,\theta)) \mapsto (g(b), (r, \theta))$, we get a genuine embedding between the total spaces of the open books.  
	
	\subsection{Seifert type embedding} Let $\Sigma^{n-1}$ be an oriented compact submanifold of an oriented closed manifold $M^n$ with non empty boundary $\partial \Sigma$. Consider the product manifold $M \times [a,b]$ ($a < b)$. We construct a proper embedding $f$ of $\Sigma$ in $M \times [a,b]$ in the following way. We isotope the interior of $\Sigma \times \{a\} \subset M \times \{a\}$ into the interior of $M \times [a,b]$, fixing the boundary $\partial \Sigma \times \{a\}$ pointwise. The resulting embedding $f$ of $\Sigma$ is such that $f(\Sigma) \cap (M \times \{b\}) = \Sigma \times \{b\}$ and $f(\Sigma) \cap (M \times \{(1-t) a + t b\}) = \partial \Sigma \times \{(1-t) a + t b\}$ for all $t \in [0,1)$. See Figure \ref{stfig}. We call $f$ a \emph{Seifert type embedding} of $\Sigma$ in $M \times [a,b]$.

	\begin{figure}[htbp] 
		
		\centering
		\def\svgwidth{10cm}
		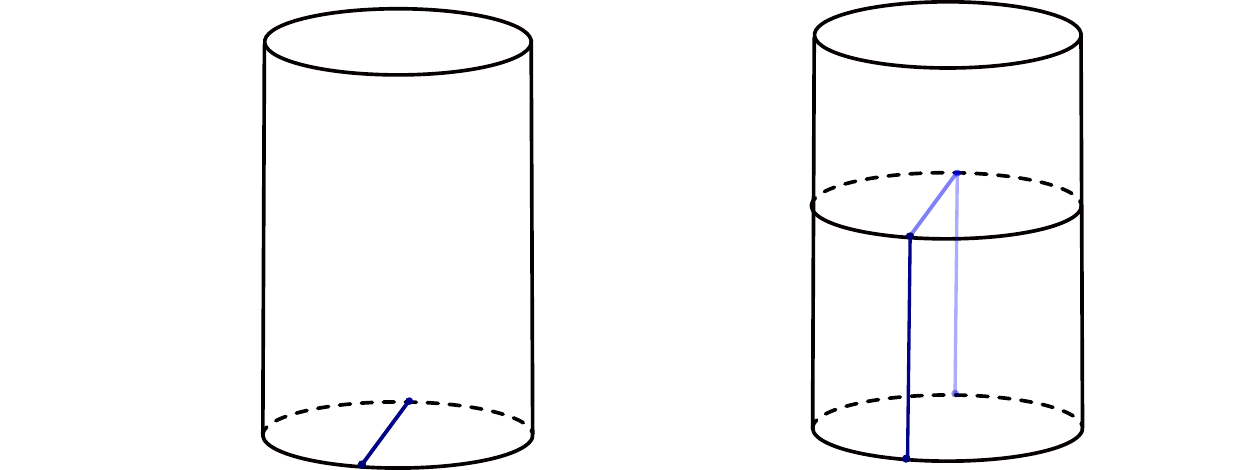
		\caption{Isotoping $\Sigma \subset M \times \{0\}$ to a Seifert type embedding in $M \times [0,1]$.}
		\label{stfig}
		
	\end{figure}
	
	\subsection{Isotopy via the flow of an open book} Given an open book decomposition $\ob(\Sigma, \phi)$ of a manifold $M$, there is a flow on $M$ whose time-$1$ map takes a page to itself with diffeomorphism $\phi$. The vector field corresponding to this flow is transverse to the interior of each page. In a neighborhood of the binding, $(\partial \Sigma \times \D^2, (x,r,\theta))$, it is given by $r \cdot \frac{\partial}{\partial \theta}$. In particular, the flow fixes the binding pointwise. Thus, there is an isotopy $\psi_t$ ($t \in [0,1]$) of $M$ such that $\psi_0 = \id_M$ and $\psi_1|\Sigma = \iota \circ \phi$, where $\iota$ is an embedding of $\Sigma$ in $M$ as a page of the open book.  
	
	\subsection{The annulus trick}	A Hopf annulus is an embedding of an annulus in $\mathbb{S}^3$ with boundary a Hopf link. A Hopf annulus is called positive (or negative) if the linking number of its boundary is $1$ (or $-1$). Let $\tau_c$ denote the Dehn twist along the core curve of an annulus. 
	
	\begin{figure}[!htb]
		\centering
		\def\svgwidth{6cm}
\begingroup%
  \makeatletter%
  \providecommand\color[2][]{%
    \errmessage{(Inkscape) Color is used for the text in Inkscape, but the package 'color.sty' is not loaded}%
    \renewcommand\color[2][]{}%
  }%
  \providecommand\transparent[1]{%
    \errmessage{(Inkscape) Transparency is used (non-zero) for the text in Inkscape, but the package 'transparent.sty' is not loaded}%
    \renewcommand\transparent[1]{}%
  }%
  \providecommand\rotatebox[2]{#2}%
  \newcommand*\fsize{\dimexpr\f@size pt\relax}%
  \newcommand*\lineheight[1]{\fontsize{\fsize}{#1\fsize}\selectfont}%
  \ifx\svgwidth\undefined%
    \setlength{\unitlength}{335.27064575bp}%
    \ifx\svgscale\undefined%
      \relax%
    \else%
      \setlength{\unitlength}{\unitlength * \real{\svgscale}}%
    \fi%
  \else%
    \setlength{\unitlength}{\svgwidth}%
  \fi%
  \global\let\svgwidth\undefined%
  \global\let\svgscale\undefined%
  \makeatother%
  \begin{picture}(1,0.3300828)%
    \lineheight{1}%
    \setlength\tabcolsep{0pt}%
    \put(0,0){\includegraphics[width=\unitlength,page=1]{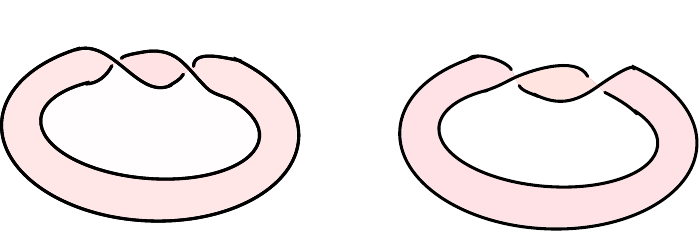}}%
    \put(0.20602709,0.28022802){\color[rgb]{0,0,0}\makebox(0,0)[lt]{\lineheight{1.25}\smash{\begin{tabular}[t]{l}$+$\end{tabular}}}}%
    \put(0.7680343,0.24995874){\color[rgb]{0,0,0}\makebox(0,0)[lt]{\lineheight{1.25}\smash{\begin{tabular}[t]{l}$-$\end{tabular}}}}%
    \put(-0.16097568,0.47801636){\color[rgb]{0,0,0}\makebox(0,0)[lt]{\begin{minipage}{1.16189242\unitlength}\raggedright \end{minipage}}}%
    \put(0.03036771,0.36044391){\color[rgb]{0,0,0}\makebox(0,0)[lt]{\begin{minipage}{0.97285442\unitlength}\raggedright \end{minipage}}}%
    \put(0,0){\includegraphics[width=\unitlength,page=2]{Hopf_band.pdf}}%
    \put(0.1517657,0.02355307){\color[rgb]{0,0,0}\makebox(0,0)[lt]{\lineheight{1.25}\smash{\begin{tabular}[t]{l}$c$\end{tabular}}}}%
    \put(0.72719494,0.01511156){\color[rgb]{0,0,0}\makebox(0,0)[lt]{\lineheight{1.25}\smash{\begin{tabular}[t]{l}$c$\end{tabular}}}}%
  \end{picture}%
\endgroup%

		\caption{Hopf annuli}
		\label{fig-01}
	\end{figure}

	\begin{proposition}[The Hopf annulus trick \cite{hy},\cite{pps}]\label{pr1}
		There exists a Seifert type embedding $f$ of an annulus in $\s^3 \times [-1,1]$ such that there exists an isotopy $\Gamma_s$ ($s \in [0,1]$) of $\s^3 \times [0,1]$, relative to boundary, such that $\Gamma_0 = \id$ and $\Gamma_1 \circ f = f \circ \tau_c$.
	\end{proposition}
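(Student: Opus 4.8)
The plan is to first recognise the Hopf annulus $\mathcal{A}\subset\s^3$ as a page of the standard open book decomposition of $\s^3$. Writing $\s^3=\{(z_1,z_2)\in\C^2:|z_1|^2+|z_2|^2=1\}$, the map $(z_1,z_2)\mapsto\arg(z_1z_2)$ is a fibration over $\s^1$ whose binding is the Hopf link $\{z_1z_2=0\}=\partial\mathcal{A}$, whose page is an annulus, and whose core $c$ is an unknot in $\s^3$; its monodromy is exactly the Dehn twist $\tau_c$ (for the positive annulus; $\tau_c^{-1}$ for the negative one). By the flow of this open book, as in the preceding subsection, there is an isotopy $\psi_t$ of $\s^3$ with $\psi_0=\id$, with $\psi_1|_{\mathcal{A}}=\tau_c$, fixing the binding pointwise; in the Hopf model one may take $\psi_t(z)=e^{\pi i t}z$, so that $\psi_1$ is isotopic to $\id_{\s^3}$. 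I would record at the outset this isotopy, together with the fact that $c$ bounds an embedded disk in $\s^3$.

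Next I would form the Seifert type embedding $f$ of $\mathcal{A}$ into $\s^3\times[-1,1]$ exactly as in the preliminaries, isotoping the interior of $\mathcal{A}$ off the boundary level while keeping $\partial\mathcal{A}$ fixed, and I would arrange that the core $f(c)$, together with an annular neighbourhood of it carrying the support of $\tau_c$, lies in the open interior $\s^3\times(0,1)$. The point of pushing $\mathcal{A}$ into a $4$-dimensional ambient space is that there $\tau_c$ can be untwisted: since $c$ is unknotted, $f(c)$ bounds an embedded disk $D$ in the interior of $\s^3\times[0,1]$, and after a small isotopy $D$ meets $f(\mathcal{A})$ only along $f(c)$.

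The ambient isotopy $\Gamma_s$ would then be produced by spinning $D$. The disk $D$ equips $c$ with its $0$-framing, while the annulus $\mathcal{A}$ equips $c$ with its surface framing, and for the core of a Hopf band these two framings differ by $\pm1$. Using $D$, this $\pm1$ change of framing can be realised by an ambient \emph{disk spin} $\Gamma_s$ supported in a compact subset of the interior $\s^3\times(0,1)$ (it is the extra fourth dimension that makes the twist isotopically trivial relative to the boundary); being supported away from $\s^3\times\{0,1\}$, the isotopy $\Gamma_s$ is automatically relative to $\partial(\s^3\times[0,1])$, and $\Gamma_0=\id$. This spin changes the surface framing of $f(c)$ by one, so that $\Gamma_1$ restricts on $f(\mathcal{A})$ to a full Dehn twist along $f(c)$ and is the identity elsewhere on $f(\mathcal{A})$; hence $\Gamma_1\circ f$ and $f\circ\tau_c$ agree up to isotopy relative to the boundary, which is the desired conclusion. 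The negative Hopf annulus is identical, with the spin run in the opposite sense.

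The main obstacle I anticipate is the bookkeeping that pins down exactly one Dehn twist. Naively suspending the open book flow $\psi_t$ over the height coordinate does nothing useful, because $\psi_t$ fixes the binding and hence fixes the \emph{walls} $\partial\mathcal{A}\times[0,1]$ of the pushed annulus; the twist has to be manufactured on the interior of $f(\mathcal{A})$ using the disk $D$, and the essential input is precisely that $c$ is unknotted, equivalently that $\psi_1$ is isotopic to $\id_{\s^3}$. One must therefore verify carefully that the surface framing and the disk framing of the unknotted core differ by exactly $+1$ for the positive annulus, so that a single spin of $D$ yields $\tau_c$ rather than some other power, and that the interpolating isotopy can indeed be chosen supported in the interior so as to be relative to the boundary. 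This framing computation, together with the compatibility of the spin with the Seifert push, is the crux of the argument.
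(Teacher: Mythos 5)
The step where your argument goes off the rails is the dismissal of the suspension of the open book flow. You write that suspending $\psi_t$ over the height coordinate ``does nothing useful'' because $\psi_t$ fixes the binding and hence the walls of the pushed-in annulus; in fact that suspension \emph{is} the paper's proof, and the fixing of the walls is what makes it work rather than what breaks it. A Seifert type embedding $f$ places a full copy of $\mathcal{A}$ at an interior level $\s^3\times\{0\}$ and the rest of $f(\mathcal{A})$ inside $\partial\mathcal{A}\times[-1,0]$, i.e.\ inside the binding times an interval. The paper defines $\Gamma_s(x,t)=\Psi_{s(1-t)}(x)$ for $t\ge 0$ and $\Psi_{s(t+1)}(x)$ for $t\le 0$: this runs the flow for time $s$ at level $0$ and tapers linearly to time $0$ at the two ends. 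It is relative to $\s^3\times\{\pm1\}$ because $\Psi_0=\id$; it fixes the walls pointwise because they lie in the binding; and on the level-$0$ copy of $\mathcal{A}$ it acts by $\Psi_1$, whose restriction to a page is by definition the monodromy $\tau_c$. Hence $\Gamma_1\circ f=f\circ\tau_c$. Your claimed ``essential input''---that $c$ is unknotted, equivalently that $\psi_1$ is isotopic to $\id_{\s^3}$---is also misplaced: $\Psi_1$ is isotopic to the identity through the flow $\Psi_t$ itself, for any open book on any manifold, and no disk bounded by the core is used anywhere. This matters because the paper immediately generalizes the argument to Proposition~\ref{pagetrick} (an arbitrary page of an arbitrary open book $N=\ob(V,\phi)$), where there is no unknotted core and no bounding disk; your proposed mechanism would not survive that generalization, while the tapered suspension does.

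As for your substitute argument: spinning a disk $D$ bounded by $f(c)$ is essentially the Hirose--Yasuhara point of view \cite{hy} and can presumably be completed, but in your write-up the decisive step is asserted rather than proved. You never define the ``ambient disk spin,'' nor show that it can be chosen supported in the interior (hence relative to the boundary), nor compute that it changes the surface framing of $f(c)$ by exactly $\pm1$ and therefore induces exactly one Dehn twist on $f(\mathcal{A})$ while fixing the rest of the annulus; you explicitly defer all of this as ``the crux of the argument.'' Since that deferred content is the entire content of the proposition, the proposal as written has a genuine gap. Either carry out the framing computation and the construction of the spin in detail, or use the tapered suspension of the open book flow, which proves the statement directly and generalizes.
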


	The main idea to prove Proposition \ref{pr1} is to observe that $\s^3$ admits an open book decomposition with page a Hopf annulus $\mathcal{A} \subset \s^3$ and monodromy a Dehn twist along the core curve $c$ of $\mathcal{A}$. Consider a Seifert type embedding $f$ of an annulus in $\s^3\times [0,1]$ corresponding to $\mathcal{A} \subset \s^3$. Let $\Psi_t$ be the isotopy of $\s^3$ such that $\Psi_1$ realizes the Dehn twist monodromy on $\mathcal{A}$ via the flow of the open book $\textrm{OB}(\mathcal{A},\tau_c)$. One can then define an isotopy $\Gamma_s$ ($s\in[0,1]$) of $\s^3 \times  [-1,1]$ such $\Gamma_0 = \id_{\s^3 \times [-1,1]}$ and $\Gamma_1|_{\s^3 \times \{0\}} = \Psi_1.$

	$$
	\Gamma_s (x,t) 
	=
	\left\{
	\begin{array}{ll}
		\Psi_{s(1-t)}(x)  & \mbox{if } t \geq 0 \\
		\Psi_{s(t+1)}(x) & \mbox{if } t \leq 0
	\end{array}
	\right . 
	$$
	
	\noindent It is now easy to see that $\Gamma_1 \circ f = f \circ \tau_c$. By applying $\Gamma_1$ repeatedly on $\s^3 \times [-1,1]$, one can induce the diffeomorphism $\tau_c^k$ for any $k \in \Z$.

	\subsection{A generalization of the annulus trick} \label{genhopftrick}
	
	The Hopf annulus trick has a straightforward generalization for any open book decomposition. Let $N^n = \ob(V^{n-1},\phi)$. Consider the Seifert type embedding $h$ of $V$ in $N\times [-1,1]$ corresponding to the embedding of $V$ in $N$ as a page of the open book. We can follow the proof of the Hopf annulus trick (described in the previous section) verbatim by replacing $\s^3$, $\mathcal{A}$ and $\tau_c$ with $N$, $V$ and $\phi$, respectively. Thus, we get an isotopy $H_s$ ($s \in [0,1]$) of $N \times [-1,1]$ such that $H_0 = \id_{N \times [0,1]}$ and $H_1 \circ h = h \circ \phi$. Similar to the previous case, for $k \in \Z$, the diffeomorphism $\phi^k$ can be induced by a repeated application of $H_1$. 
	
	\begin{proposition}[The page trick] \label{pagetrick}
		
		There exists a Seifert type embedding $h$ of $V$ in $N \times [-1,1]$ such that there is an isotopy $\tilde{\Gamma}_s$ ($s \in [0,1]$) of $N \times [-1,1]$, relative to boundary, such that $\tilde{\Gamma}_0 = \id$ and $\tilde{\Gamma}_1 \circ h = h \circ \phi$. 
	
	\end{proposition}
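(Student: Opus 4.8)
The plan is to run the proof of the Hopf annulus trick (Proposition \ref{pr1}) verbatim, substituting $N$, $V$ and $\phi$ for $\s^3$, $\mathcal{A}$ and $\tau_c$; the only feature of the triple $(\s^3,\mathcal{A},\tau_c)$ used there was that $\s^3$ carries an open book whose page is $\mathcal{A}$ and whose associated flow realizes $\tau_c$, and this role is now played directly by the hypothesis $N=\ob(V,\phi)$.

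First I would fix an embedding $\iota\colon V\hookrightarrow N$ of $V$ as a page of $\ob(V,\phi)$ and invoke the flow of the open book to produce an isotopy $\Psi_t$ ($t\in[0,1]$) of $N$ with $\Psi_0=\id_N$ and $\Psi_1\circ\iota=\iota\circ\phi$, where $\Psi$ is transverse to the interior of every page and fixes the binding $\partial V$ pointwise (near the binding it is the rotation $r\cdot\frac{\partial}{\partial\theta}$). Next I would take the Seifert type embedding $h$ of $V$ in $N\times[-1,1]$ corresponding to the page $\iota(V)\subset N$, with image in $N\times[0,1]$. Finally I would define the candidate isotopy by damping $\Psi$ in the interval direction,
$$
\tilde{\Gamma}_s(x,t)=
\begin{cases}
\bigl(\Psi_{s(1-t)}(x),\,t\bigr), & t\ge 0,\\[2pt]
\bigl(\Psi_{s(t+1)}(x),\,t\bigr), & t\le 0,
\end{cases}
$$
which is smooth (both branches give $\Psi_s$ on $N\times\{0\}$), satisfies $\tilde\Gamma_0=\id$, and equals the identity on $N\times\{\pm1\}$ for every $s$, so that it is an isotopy of $N\times[-1,1]$ relative to the boundary.

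It then remains to verify the compatibility $\tilde\Gamma_1\circ h=h\circ\phi$, which I expect to be the main obstacle. The mechanism is that the damped flow $\tilde\Gamma_1$ carries the Seifert page $h(V)$ to itself: it is the identity on the slice $N\times\{1\}$ (where $\Psi_{1-t}=\Psi_0$) and fixes the binding, while along the interior it spreads the full time-one flow $\Psi_1$ across the interval. Since $\Psi_1$ induces exactly the monodromy $\phi$ on a page and the damping interpolates this to the identity at the ends without moving the binding, the self-map of $V$ induced by $\tilde\Gamma_1$ through $h$ is isotopic, rel boundary, to $\phi$. The delicate point is to pin down this induced reparametrization as $\phi$ rather than merely checking that the image $h(V)$ is preserved setwise; as in the annulus case, this is precisely where the form of the Seifert embedding $h$ relative to the flow $\Psi$ is used, and once $h$ is arranged so that $\tilde\Gamma_1$ restricts on $h(V)$ to the geometric monodromy, the identity $\tilde\Gamma_1\circ h=h\circ\phi$ follows. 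Iterating $\tilde\Gamma_1$ then yields $\phi^k$ for every $k\in\Z$, as recorded in Section \ref{genhopftrick}.
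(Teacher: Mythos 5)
Your proposal is correct and follows essentially the same route as the paper: the paper's proof is literally the statement that one runs the Hopf annulus trick verbatim with $(\s^3,\mathcal{A},\tau_c)$ replaced by $(N,V,\phi)$, using the flow of the open book to produce $\Psi_t$ and the same damped isotopy $\tilde{\Gamma}_s(x,t)$ on $N\times[-1,1]$. The final verification $\tilde{\Gamma}_1\circ h=h\circ\phi$, which you flag as the delicate point, is dispatched in the paper with the same observation that the Seifert page is positioned so that the damped flow restricts to the geometric monodromy on it.
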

	
	\noindent If $N$ is a boundary component of an oriented manifold $W^{n+1}$, then we can extend $\Gamma_1$ to a diffeomorphism of $W$ via the identity map, and $\Gamma_1$ is also isotopic to identity, where the isotopy is supported in a collar neigborhood of $N$.

	\section{Proof of the main theorem}
	
	\begin{proof}[Proof of Theorem \ref{thm0}]  Recall that $M^n = \ob(P^{n-1}, \rho)$, where $\rho$ restricts to the identity map on a collar neighborhood $\mathcal{C}(B)$ of $B = \partial P$ in $P$. Let us identify this collar neighborhood with $B \times [-1,1]$ such that $B \times \{-1\} = \partial P$. Consider an open book decomposition $\ob(V_B,\phi_B)$ of $B$ and let $f$ be the Seifert type embedding of $V_B$ in $B \times [-1,1]$ such that $f(\partial V_B) \subset B \times \{-1\}$.  This gives a proper embedding of $V_B$ in $P$. Given an integer $k$, by Proposition \ref{pagetrick}, there exists a diffeomorphism $\Gamma_k$ of $P$ with the following properties.
		
		\begin{enumerate}
			\item  $\Gamma_k$ is supported on $\mathcal{C}(B)$ and $\Gamma_k$ is isotopic to the identity map, relative to $P \setminus \mathcal{C}(B)$.
			
			\item $\Gamma_k \circ f = f \circ \phi_B^k$.   
		\end{enumerate}   
		
		\noindent Since $\rho|_{\mathcal{C}(B)} = \id_P$, $\ob(V_B, \phi^k_B)$ spun embeds in $\ob(P,\rho)$.
		
	\end{proof}

	\section{The page trick in a more general setup} \label{pagetrickap} 
	
	Let $M^n$ be a component of the binding of on open book $\ob(\Sigma,\phi)$. Let $\ob(P_1^{n-1}, \rho_1)$ and $\ob(P_2^{n-1}, \rho_2)$ be two open book decompositions of a closed oriented manifold $M^n$. Let $g_i$ denote the embedding of $P_i$ in $M$ as a fixed page of the open books for $i = 1,2$. Let $V$ be a compact bounded submanifold of $M$ such that $g_1(P_1) \cup g_2(P_2)$ is a submanifold of $V$. 
	
	\begin{lemma} \label{genpagetrick}
	$\ob(V, \eta)$ spun embeds in $\ob(\Sigma, \phi)$ for any $\eta \in \langle \rho_1, \rho_2\rangle \subset \Diff_\partial(V)$.
	\end{lemma}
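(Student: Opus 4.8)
The plan is to mimic the proof of Theorem \ref{thm0}, realizing each generator of $\langle\rho_1,\rho_2\rangle$ by a single application of the page trick and then obtaining an arbitrary word by composition. Since $M$ is a component of the binding of $\ob(\Sigma,\phi)$, there is a collar neighborhood $\mathcal{C}(M) = M \times [-1,1]$ of $M = \partial\Sigma$ in $\Sigma$, with $M \times \{-1\} = \partial\Sigma$, on which $\phi$ restricts to the identity. I would fix a Seifert type embedding $f$ of $V$ into $\mathcal{C}(M)$; this is a proper embedding of $V$ into $\Sigma$ that is meant to serve as the page of the spun embedding for every $\eta$ at once. Because $\phi|_{\mathcal{C}(M)} = \id$, we have $\phi \circ f = f$, so the spun embedding condition for $\ob(V,\eta)$ in $\ob(\Sigma,\phi)$ reduces to producing, for each $\eta$, an isotopy rel $\partial V$ from $f\circ\eta$ to $f$.

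First I would realize a single generator $\rho_i$. Viewing $g_i(P_i)$ as a codimension-zero submanifold of $V$ and using that $\rho_i$ is the identity near $\partial P_i$, extend $\rho_i$ by the identity to an element $\bar\rho_i \in \Diff_\partial(V)$; these extensions are the generators meant by $\langle\rho_1,\rho_2\rangle \subset \Diff_\partial(V)$. Applying Proposition \ref{pagetrick} to the open book $\ob(P_i,\rho_i)$ of $M$ yields an ambient isotopy $\Gamma^{(i)}_s$ of $\mathcal{C}(M)$, relative to $\partial\mathcal{C}(M) = M \times \{-1,1\}$, with $\Gamma^{(i)}_0 = \id$ and $\Gamma^{(i)}_1$ inducing $\rho_i$ on the Seifert-embedded page $P_i$. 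The point I would need is that $f$ can be chosen so that $\Gamma^{(i)}_1 \circ f = f \circ \bar\rho_i$ for both $i$ simultaneously: $f$ should restrict on $P_i$ to the Seifert page of the $i$-th page trick, while $f(V \setminus P_i)$ is pushed into a part of the collar on which $\Gamma^{(i)}$ acts trivially. By the remark following Proposition \ref{pagetrick}, each $\Gamma^{(i)}_1$ is supported in the interior of $\mathcal{C}(M)$ and is isotopic to the identity rel $\partial\mathcal{C}(M)$, hence extends to $\Sigma$ by the identity and remains isotopic to the identity rel $\partial\Sigma$.

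Granting this, the composition step is formal. For a word $\eta = \bar\rho_{i_1}^{\epsilon_1}\cdots\bar\rho_{i_m}^{\epsilon_m}$, set $\Gamma = (\Gamma^{(i_1)}_1)^{\epsilon_1}\circ\cdots\circ(\Gamma^{(i_m)}_1)^{\epsilon_m}$. Using $\Gamma^{(i)}_1 \circ f = f\circ\bar\rho_i$ (and its inverse) repeatedly gives $\Gamma\circ f = f\circ\eta$, and $\Gamma$ is a composition of diffeomorphisms each supported in $\mathcal{C}(M)$ and isotopic to the identity rel $\partial\Sigma$, hence itself isotopic to the identity rel $\partial\Sigma$. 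Composing that isotopy with $f$ exhibits $f\circ\eta \simeq f = \phi\circ f$ rel $\partial V$, which is exactly the required spun embedding of $\ob(V,\eta)$ in $\ob(\Sigma,\phi)$, with the fixed page $f(V)$ independent of $\eta$.

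The main obstacle is the claim in the second paragraph: arranging a single Seifert embedding $f$ of $V$ on which both page tricks induce the honest extensions $\bar\rho_1$ and $\bar\rho_2$, rather than the global first-return maps of the two flows. Concretely, one must localize the page-trick isotopy $\Gamma^{(i)}$ to a neighborhood of the Seifert-embedded page $P_i$ inside the collar, so that its induced action is $\rho_i$ on $P_i$ and the identity on $V\setminus P_i$, and simultaneously position $V\setminus P_i$ outside the supports of both localized isotopies while keeping $f$ a proper embedding that agrees with the two page-trick embeddings on $P_1$ and on $P_2$ (in particular on $P_1\cap P_2$). This localization is the codimension-two heart of the argument, and is the analogue, for a general page, of confining the Hopf annulus trick to a ball around the core curve in the surface case.
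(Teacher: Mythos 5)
There is a genuine gap, and you have correctly located it yourself: the claim in your second paragraph that a single static Seifert embedding $f$ of $V$ can be chosen so that $\Gamma^{(1)}_1 \circ f = f \circ \bar\rho_1$ and $\Gamma^{(2)}_1 \circ f = f \circ \bar\rho_2$ simultaneously is the whole content of the lemma, and your proposed route to it does not work. The page-trick isotopy of Proposition \ref{pagetrick} is built from the flow of the open book $\ob(P_i,\rho_i)$ of $M$; it is not supported near the embedded page $P_i$ but moves essentially all of the slab $M \times (-1,1)$, fixing only the two boundary levels and the binding locus. So you cannot ``position $V \setminus P_i$ outside the support'' of a localized $\Gamma^{(i)}$ while keeping it in the same slab, and asking $f$ to agree with the Seifert-page positions of two unrelated open books of $M$ on the overlap $P_1 \cap P_2$ is not something you can arrange in general. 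Left as stated, your argument only produces the global first-return maps of the two flows on $f(V)$, not the extensions $\bar\rho_1, \bar\rho_2$ by the identity.

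The paper circumvents this with a dynamic maneuver rather than a static one. It parks the Seifert-embedded $V$ entirely in $M \times [-2,-1]$ inside the collar $M \times [-2,2]$. To realize one letter $\rho_i$, it isotopes only the codimension-zero piece $g_i(P_i) \subset f(V)$, relative to its boundary, up into $M \times [-1,0]$ so that this piece alone becomes the Seifert-type page of $\ob(P_i,\rho_i)$; it then applies the page trick in $M \times [-1,1]$, which is an isotopy relative to $M \times \{-1\}$ and therefore fixes the remainder of $f(V)$ sitting at or below that level; finally it isotopes $g_i(P_i)$ back down. The composite ambient isotopy induces exactly $\rho_i$ extended by the identity. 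The word $\eta$ is then realized by distributing these isotopies, one letter at a time, along $l$ equal subintervals of the mapping cylinder $\Sigma \times [0,1]$ (your alternative of composing the diffeomorphisms of $\Sigma$ and using that each is isotopic to the identity rel $\partial\Sigma$ is an acceptable substitute for that last step). The missing idea in your write-up is precisely this ``lift one piece above the dividing level, twist, and lower it back'' move, which replaces both the localization of the isotopy and the simultaneous-compatibility condition you identified as the obstacle.
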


	\begin{exmp}\label{expg1}
		
	The $5$-sphere admits an open book decomposition with page the disk cotangent bundle of $\s^2$ and monodromy a Dehn-seidel twist : $\ob(DT^*\s^2, \tau_2)$ (see \cite{Saha}). Another open book on $\s^5$ has page is given by $\ob(\s^1 \times \D^3 \#_b \s^3 \times [0,1], \sigma)$, where $\sigma$ is the \emph{push} map (see \cite{hsueh1}, \cite{hsueh2}). Let $g_1$ and $g_2$ be the embeddings of $P_1 = DT^*\s^2$ and $P_2 = \s^1 \times \D^3 \#_b \s^3 \times [0,1]$, respectively, in $\s^5$. Then, $g_1(P_1) \#_b g_2(P_2)$ gives a hypersurface in $\s^5 = \s^5 \# \s^5$. By Lemma \ref{genpagetrick}, for any diffeomorphism $\eta$ of $g_1(P_1) \#_b g_2(P_2)$ generated by $\tau_2$ and $\sigma$, $\ob(g_1(P_1) \#_b g_2(P_2), \eta)$ spun embeds in $\ob(\D^6,\id)$.  \noindent Hsueh \cite{hsueh2} has given a large family of new open book decompositions on $\s^n$. Using his open books, one can generate many more examples as the one above.  
		
	\end{exmp} 
	
	\begin{remark}
		It follows from Theorem \ref{thm0} that $M_k = \ob(\s^1 \times \D^2 \#_b \s^2 \times [0,1], \sigma^k)$ spun embeds in $\ob(\D^5,\id) = S^6$. Since $H_1(M_k) = \Z_k$, by a result of Cochran (see Theorem $6.2$ in \cite{coch}), $M_k$ also embeds in $\s^5$ for all $k \geq 0$. It would be interesting to know whether $M_k$ spun embeds in $\s^5$ for all $k \geq 2$. 
	\end{remark}
	
	\begin{exmp}\label{expg2}
		
	In example \ref{expg1}, we obtained the page as boundary connected sum of pages of two open books. Suppose we have two pages $P^n_1$ and $P^n_2$ such that there exist relative embeddings $e_1 : (\D^k \times \D^{n-k}, \partial \D^k \times \D^{n-k})  \rightarrow (P_1, \partial P_1)$ and $e_2 : (\D^{n-k}\times \D^k, \partial \D^{n-k} \times \D^k) \rightarrow (P_2, \partial P_2)$. Then one can define the plumbing of $P_1$ and $P_2$ by gluing the images of $e_1$ and $e_2$ via the map $e_1(\vec{x},\vec{y}) \mapsto e_2((-1)^k \vec{y}, \vec{x})$. Ozbagci and Popescu-Pampu has defined a generalized notion of plumbing where two manifolds such as $P_1$ and $P_2$ can be \emph{summed} using a common \emph{patch} (see section $7$ in \cite{OP}).  
		
	\end{exmp}

	\begin{figure}[htbp] 
		
		\centering
		\def\svgwidth{10cm}
\begingroup%
  \makeatletter%
  \providecommand\color[2][]{%
    \errmessage{(Inkscape) Color is used for the text in Inkscape, but the package 'color.sty' is not loaded}%
    \renewcommand\color[2][]{}%
  }%
  \providecommand\transparent[1]{%
    \errmessage{(Inkscape) Transparency is used (non-zero) for the text in Inkscape, but the package 'transparent.sty' is not loaded}%
    \renewcommand\transparent[1]{}%
  }%
  \providecommand\rotatebox[2]{#2}%
  \newcommand*\fsize{\dimexpr\f@size pt\relax}%
  \newcommand*\lineheight[1]{\fontsize{\fsize}{#1\fsize}\selectfont}%
  \ifx\svgwidth\undefined%
    \setlength{\unitlength}{639.32209268bp}%
    \ifx\svgscale\undefined%
      \relax%
    \else%
      \setlength{\unitlength}{\unitlength * \real{\svgscale}}%
    \fi%
  \else%
    \setlength{\unitlength}{\svgwidth}%
  \fi%
  \global\let\svgwidth\undefined%
  \global\let\svgscale\undefined%
  \makeatother%
  \begin{picture}(1,0.5519163)%
    \lineheight{1}%
    \setlength\tabcolsep{0pt}%
    \put(0,0){\includegraphics[width=\unitlength,page=1]{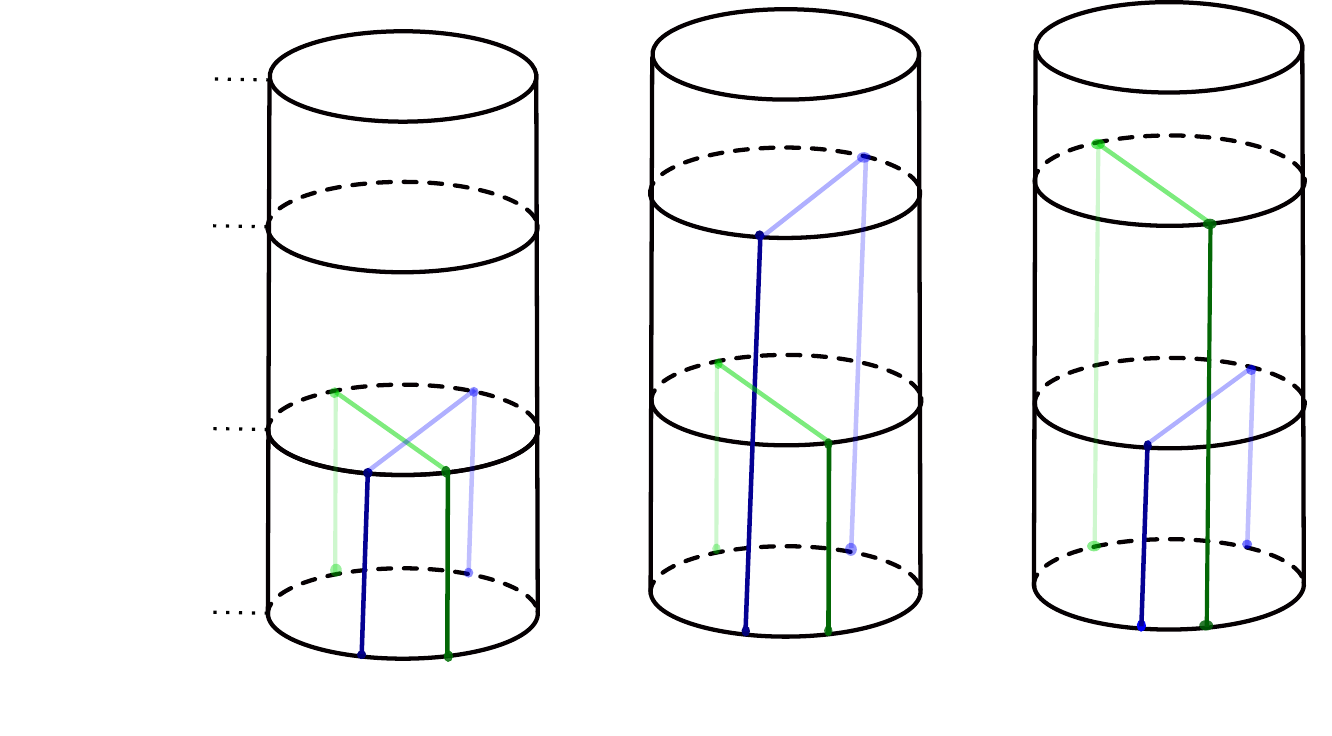}}%
    \put(0.00569692,0.09027263){\color[rgb]{0,0,0}\makebox(0,0)[lt]{\lineheight{1.25}\smash{\begin{tabular}[t]{l}$M \times \{-2\}$\end{tabular}}}}%
    \put(-0.000552,0.22403902){\color[rgb]{0,0,0}\makebox(0,0)[lt]{\lineheight{1.25}\smash{\begin{tabular}[t]{l}$M \times \{-1\}$\end{tabular}}}}%
    \put(0.01106477,0.38171443){\color[rgb]{0,0,0}\makebox(0,0)[lt]{\lineheight{1.25}\smash{\begin{tabular}[t]{l}$M \times \{ 0\}$\end{tabular}}}}%
    \put(0.02215883,0.48657019){\color[rgb]{0,0,0}\makebox(0,0)[lt]{\lineheight{1.25}\smash{\begin{tabular}[t]{l}$M \times \{1\}$\end{tabular}}}}%
    \put(0.2481109,0.01251296){\color[rgb]{0,0,0}\makebox(0,0)[lt]{\lineheight{1.25}\smash{\begin{tabular}[t]{l}$s=0$\end{tabular}}}}%
    \put(0.5277465,0.0030611){\color[rgb]{0,0,0}\makebox(0,0)[lt]{\lineheight{1.25}\smash{\begin{tabular}[t]{l}$s=\frac{1}{3}$\end{tabular}}}}%
    \put(0.83804562,0.01049719){\color[rgb]{0,0,0}\makebox(0,0)[lt]{\lineheight{1.25}\smash{\begin{tabular}[t]{l}$s=\frac{2}{3}$\end{tabular}}}}%
  \end{picture}%
\endgroup%

		\caption{The purple and green lines represent $g_1(P_1)$ and $g_2(P_2)$, respectively,  in $M \times [-2,1]$ and $s$ is a coordinate on the second factor of $\Sigma \times [0,1]$. We first isotope $g_1(P_1)$ to induce $\rho_1$ at $s = \frac{1}{3}$, and then isotope $g_2(P_2)$ to induce $\rho_2$ at $s =\frac{2}{3}$. This induces the monodromy $\rho_1 \circ \rho_2$ on $V$.}
		\label{gptfig}
		
	\end{figure}

	\begin{proof}[Proof of Lemma \ref{genpagetrick}]  We identify a collar neighborhood of $M \subset \Sigma$ with $M \times [-2,2]$ such that $\partial \Sigma = M \times \{2\}$. We take a Seifert type proper embedding of $V$ in $M \times [-2,-1]$. Suppose, we want to induce the map $\rho_1$ on $V$ first. For that we simply isotope the submanifold $g_1(P_1) \times \{-1\}$, relative to its boundary, such that after the isotopy $g_1(P_1) \times \{-1\}$ becomes a Seifert type embedding $f_1$ of $P_1$ in $M \times [-1,0]$. We then apply the page trick in the neighborhood $M \times [-1,1]$ to induce the map $\rho_1$, and isotope $f_1(P_1)$ back to $g_1(P_1) \times \{-1\}$. Similarly, one can induce the map $\rho_2$ on the properly embedded $V$ in $M \times [-2,2]$. See Figure \ref{gptfig}. Let $(\Sigma \times [0,1], (x,\theta))$ be a coordinate on the mapping cylinder of $\ob(\Sigma, \phi)$. Say, $\eta$ has a presentation as a word generated by $\rho_1$ and $\rho_2$ with $l$ many letters. We divide the mapping cylinder in $l$ equal part $\Sigma \times [\frac{i}{l}, \frac{i+1}{l}]$ ($0 \leq i \leq l-1$), and apply an isotopy of $\Sigma$ on each part to induce each of the letters. Thus, $\ob(V,\eta)$ spun embeds in $\ob(\Sigma, \phi)$. 
		
	\end{proof}

	\noindent Although Lemma \ref{genpagetrick} is stated for two open books, its proof shows that a similar statement holds true for any number of open books. The page trick can be used for constructing $5$-dimensional open books with connected binding where all $3$-dimensional open books with connected binding admit spun embedding (see Theorem $4.4$ in \cite{ls1}). Examples of such $5$-dimensional open books were first constructed by Etnyre and Lekili \cite{EL} using mapping torus of a Lefschetz fibration.

	\section{The page trick for Morse open books} \label{morseob}
	
		A \emph{Morse open book} is similar to an open book $(M^n, B^{n-1},\pi)$, except that the fibration map $\pi : M \setminus B \rightarrow \s^1$ may have singular fibers, with singularities in the interiror of a fiber.
		
	   \noindent  Let $K^{n-2}$ be a closed oriented submanifold of a closed oriented manifold $L^n$ with trivial normal bundle and let $K$ be nullhomologous in $L$. Then, there exists a hypersurface $V^{n-1} \subset L$ such that $\partial V = K$. Given such a \emph{Seifert surface} $V$, there exists a Morse open book on $L$ with a regular page $V$ (see remark $9.6$ in \cite{OP}). In particular, $L \setminus V$ can be seen as a relative cobordism $W^n$ from $V$ to itself. Let $\psi$ be the monodromy map of the regular page $V$. Let $h : W^n \rightarrow [0,1]$ be a Morse function with $h^{-1}(0) = h^{-1}(1) = V$ and $h$ has critical points only in the interior of $W$. According to Milnor's terminology, $h$ is a Morse function on the \emph{triad} $(W, h^{-1}(0), h^{-1}(1))$. The monodromy $\psi$ is then a diffeomorphism (relative to boundary) between $h^{-1}(0)$ and $h^{-1}(1)$. In this case, the analogue of the mapping torus of an open book is the quotient space $\mathcal{M}(W,h,\psi) = \frac{W}{x \sim \psi(x)}$. Thus, the triple $(W,h,\psi)$ characterizes a Morse open book structure on $L$. 
	   
	   \begin{definition}[spun embedding for Morse open books]
	   	
	   	Let $L_1^n$ and $L_2^{n+k}$ be two closed oriented manifolds. Let $(W_1,h_1, \psi_1)$ and $(W_2,h_2,\psi_2)$ be Morse open book decompositions of $L_1$ and $L_2$, respectively. We say $(W_1, h_1, \psi_1)$ spun embeds in $(W_2,h_2,\psi_2)$ if there exists an embedding $\iota$ of $L_1$ in $L_2$ such that the followings hold.

	   	\begin{enumerate}
	   		\item $h_2 \circ \iota$ restricts to a Morse function on the triad $(W_1, h_1^{-1}(0), h_1^{-1}(1))$.
	   		
	   		\item  The embedding $\iota$ takes a regular page of $h_1$ to a regular page of $h_2$ and $\iota \circ \psi_1 | _{h_1^{-1}(1)}= \psi_2 \circ \iota|_{h_1^{-1}(1)}$. 
	   		
	   	\end{enumerate}
	   	
	   \end{definition}
	   
	   \noindent Let $(W, h, \phi)$ be a Morse open book decomposition of a closed oriented manifold $N$. We take a Seifert type embedding of a regular page $V$ (i.e. a regualr level set of $h$) in $N \times [-1,0] \subset N \times [-1,1]$ such that $\partial \bar{V} \subset N \times \{-1\}$. Let $f$ be the inclusion map of $\mathcal{M}(W,h,\psi)$ in $N \setminus V$. Let $f_t = f|_{ h^{-1}(t)}$ for $t \in [0,1]$. Note that restricted to the regular page $V$, $f_1 = f_0 \circ \phi$. We now define a proper embedding $F$ of $W$ in $N \times [-1,1] \times [0,1]$. We first embed $W$ in $N \times \{0\} \times [0,1]$ by an embedding $\hat{f}$ such that $\hat{f}(x) = (f_t(x),0,t)$ for $x \in h^{-1}(t)$. Since $f_t|_{\partial \bar{V}} = \id_{\partial \bar{V}}$ for all $t \in [0,1]$, we can extend this to a proper embedding by adding the cylindrical region $\partial \bar{V} \times [-1,0] \times \{t\} \subset N \times [-1,1] \times [0,1]$.  We summerize the discussion above in the following proposition.

		\begin{proposition} \label{morsepagetrick}
			
		Let $(W, h, \phi)$ be a Morse open book decomposition of a closed oriented manifold $N$. Then, there exists an embedding $F : W \rightarrow N \times [-1,1] \times [0,1]$ with the following property.
		
		\begin{enumerate}
			
			\item The projection map $N \times [-1,1] \times [0,1] \rightarrow [0,1]$ restricts to a Morse function on the triad $(F(W), F(h^{-1}(0)), F(h^{-1}(1)))$, and
			
			\item $F \circ \phi|_{h^{-1}(1)} = F|_{h^{-1}(0)}$.
			
		\end{enumerate} 	
			
		\end{proposition}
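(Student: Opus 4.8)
The plan is to verify that the map $F$ assembled in the paragraph preceding the statement is a smooth proper embedding and to read off the two properties directly from its formula. Recall the two pieces of the construction: the \emph{horizontal} part $\hat{f}(x) = (f_t(x), 0, t)$ for $x \in h^{-1}(t)$, which places the level set $h^{-1}(t)$ into the slice $N \times \{0\} \times \{t\}$ through the inclusion $f_t$ of the Morse mapping cylinder $\mathcal{M}(W,h,\psi)$ (with $\psi = \phi$); and the \emph{Seifert collar} part, obtained by attaching $\partial \bar V \times [-1,0] \times \{t\}$, which pushes the binding $\partial \bar V$ down to the face $N \times \{-1\} \times [0,1]$. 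Because $f_t|_{\partial \bar V} = \id_{\partial \bar V}$ is independent of $t$, these collars fit together into a product region and glue smoothly to the horizontal part along a collar of $\partial V$ in each page; this is what makes $F$ a well-defined proper map on all of $W$.

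First I would check that $F$ is a smooth embedding. For injectivity, note that the inclusion $f \colon \mathcal{M}(W,h,\psi) \to N \setminus V$ is an embedding, so $\tilde f = f \circ q$, with $q \colon W \to \mathcal{M}(W,h,\psi)$ the quotient map, is injective on the interior of $W$ and identifies only $h^{-1}(1)$ with $h^{-1}(0)$ via $\phi$; the third coordinate of $F$, which equals $h(x)$, separates these two ends since $0 \neq 1$, so $F$ is injective. For the immersion property I would use that $d\hat f = (d\tilde f, 0, dh)$ and that $\tilde f$ is an immersion at every interior point of $W$. The one place this needs attention is at the critical points of $h$: there $dh = 0$, so injectivity of $d\hat f$ cannot come from the $[0,1]$–coordinate and must be supplied by $d\tilde f$. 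This is exactly where it matters that $f$ embeds the whole mapping cylinder, not merely the regular fibers: since the critical points lie in $\mathrm{int}(W)$ and $q$ is a local diffeomorphism there, $d\tilde f$ is injective at them, so $F$ remains an immersion across the singular levels. Properness follows by tracking $\partial W = h^{-1}(0) \sqcup h^{-1}(1) \cup (\partial V \times [0,1])$ to the boundary faces $N \times [-1,1] \times \{0,1\}$ and $N \times \{-1\} \times [0,1]$.

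With $F$ an embedding, property (1) is immediate: by construction the last coordinate of $F(x)$ equals $h(x)$ for every $x$, since the collar modifies only the $[-1,1]$–factor, so if $p \colon N \times [-1,1] \times [0,1] \to [0,1]$ denotes the projection then $p \circ F = h$. Hence $p|_{F(W)} = h \circ F^{-1}$ is, via the diffeomorphism $F \colon W \to F(W)$, conjugate to the Morse function $h$ on the triad $(W, h^{-1}(0), h^{-1}(1))$, so it is Morse on $(F(W), F(h^{-1}(0)), F(h^{-1}(1)))$ with the corresponding critical points and indices. For property (2), the $N$–coordinate of $F$ on the top page is $f_1$ and on the bottom page is $f_0$, and the relation $f_1 = f_0 \circ \phi$, which is precisely the compatibility that makes $f$ descend to the quotient $\mathcal{M}(W,h,\psi)$, gives $F \circ \phi|_{h^{-1}(1)} = F|_{h^{-1}(0)}$; equivalently, $F$ sends the two regular pages to the levels $t = 0, 1$ compatibly with the monodromy $\phi$.

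I expect the only genuinely delicate step to be the embedding check across the critical levels of $h$, together with the smooth matching where the horizontal sheet meets the Seifert collar along $\partial V$; everything else is bookkeeping in the three coordinate factors. The key structural input that resolves the delicate step is that $f$ is an embedding of the entire Morse mapping cylinder into $N \setminus V$, which keeps $d\tilde f$ injective exactly where $dh$ degenerates.
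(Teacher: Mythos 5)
Your proposal reconstructs exactly the construction the paper uses (the paper's ``proof'' is the paragraph preceding the proposition, summarized by ``We summerize the discussion above''): the horizontal piece $\hat f(x)=(f_t(x),0,t)$ on $N\times\{0\}\times[0,1]$ glued to the cylindrical collar $\partial\bar V\times[-1,0]\times\{t\}$, with property (1) from $p\circ F=h$ and property (2) from $f_1=f_0\circ\phi$. Your verification of injectivity, the immersion condition at critical points of $h$, and properness supplies details the paper leaves implicit, but the approach is the same.
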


		\begin{exmp} \label{brancexmp}
			
			Let $K^{n-2}$ be a closed oriented submanifold of a closed oriented manifold $L^n$ with trivial normal bundle and let $K$ be nullhomologous in $L$. Thus, there exists a Morse open book $(W, h, \phi)$ on $L$, such that the closure of $L \setminus K$ is the relative cobordism $W$ between a regular page $V$ to itself and the monodromy or the return map is $\phi$. We partition the trivial mapping cylinder $(L \times [-1,1]) \times [0,k]$ in $k$ parts : $L \times [-1,1] \times [i,i+1]$ ($0 \leq i \leq k-1$). By Proposition \ref{morsepagetrick}, we can embed a copy of $W$ in $L \times [-1,1] \times [i,i+1]$ via an embedding $F_i$ such that the projection map $L \times [-1,1] \times [i,i+1] \rightarrow [i,i+1]$ restricts to a Morse function on the triad $(F_i(W), F_i(h^{-1}(0)), F_i(h^{-1}(1)))$, and $F_i \circ \phi|_{h^{-1}(1)} = F_i|_{h^{-1}(0)}$. By concatenating $F_0, F_1,\dots, F_{k-1}$, we get a spun embedding of a Morse open book $(W_k, h_k, \phi^k)$ in $\ob(L \times [-1,1],\id) = L \times \s^2$, where $W_k$ is a relative cobordism obtained by concatenating $k$ copies of $W$ and $h_k$ is the corresponding Morse function given by $h \ast h \ast \cdots \ast h$ ($k$ times). By construction, the total space $\widetilde{L}$ of the Morse open book $(W_k, h_k, \phi^k)$ is a $k$-fold cyclic branched cover of $L$ branched along $K$.
			
		\end{exmp}

\end{document}